\documentclass[11pt]{article}

\usepackage{graphicx,amsmath,amsthm,color}

\newtheorem{thm}{Theorem}[section]
\newtheorem{prop}[thm]{Proposition}

\newtheorem{conj}[thm]{Conjecture}
\newtheorem{fact}[thm]{Fact}

\newcommand{\C}[1]{\ensuremath{\mathbf{C}^{#1}}}
\newcommand{\CP}[1]{\ensuremath{\mathbf{CP}^{#1}}}
\newcommand{\RP}[1]{\ensuremath{\mathbf{RP}^{#1}}}
\newcommand{\R}[1]{\ensuremath{\mathbf{R}^{#1}}}
\newcommand{\RR}{\ensuremath{\mathcal{R}}}
\newcommand{\OO}{\ensuremath{\mathcal{O}}}
\newcommand{\Ss}{\ensuremath{\mathcal{S}}}
\newcommand{\CC}[1]{\ensuremath{\mathcal{C}_{#1}}}
\newcommand{\T}[1]{\ensuremath{\mathcal{T}_{#1}}}
\newcommand{\Alt}[1]{\ensuremath{\mathcal{A}_{#1}}}
\newcommand{\Z}[1]{\ensuremath{\mathbf{Z}_{#1}}}
\newcommand{\I}{\ensuremath{\mathcal{I}}}
\newcommand{\Ih}{\ensuremath{\widehat{\mathcal{I}}}}
\newcommand{\It}{\ensuremath{\widetilde{\mathcal{I}}}}
\newcommand{\Ah}{\ensuremath{\widehat{A}}}
\newcommand{\At}{\ensuremath{\widetilde{A}}}

\title{Dynamics of a soccer ball}
\author{Scott Crass\\
Mathematics Department\\
CSULB\\
Long Beach, CA  90840\\
scrass@csulb.edu}

\begin{document}

\maketitle

\begin{abstract}

Exploiting the symmetry of the regular icosahedron, Peter Doyle and Curt McMullen constructed a solution to the quintic equation.  Their algorithm relied on the dynamics of a certain icosahedral equivariant map for which the icosahedron's twenty face-centers---one of its special orbits---are superattracting periodic points.  The current study considers the question of whether there are icosahedrally symmetric maps with superattracting periodic points at a $60$-point orbit. The investigation leads to the discovery of two maps whose superattracting sets are configurations of points that are respectively related to the soccer ball and a companion structure.  It concludes with a discussion of how a generic $60$-point attractor provides for the extraction of all five of the quintic's roots.

\end{abstract}

\section{Overview}

Felix Klein's work on solving the quintic turned the regular icosahedron into a
mathematical industry. \cite{klein}  Nearly 100 years after Klein developed a
solution, \cite{dm} exploited icosahedral geometry and algebra to
manufacture an algorithm that harnesses symmetrical dynamics to solve the
quintic.    At the core of their method is a degree-$11$ rational map
$f$ on \CP{1}---the Riemann sphere---with two distinctive features: 1) $f$
enjoys the same symmetries as the icosahedron and 2) its $20$ critical points
are $2$-periodic:
$$f^2(a)=f(f(a))=a \quad \text{and} \quad  f(a)\neq a.$$
As a consequence, the critical points are superattracting pairs and the union
of their \emph{basins of attraction} have full measure on the sphere.

Due to the symmetry of the Doyle-McMullen map $f$, the $20$-point attracting
set constitutes an orbit under the icosahedral action on the sphere.  However,
there are $60$ rotational symmetries of the icosahedron---a group isomorphic to
the alternating group \Alt{5}---so that the $20$-point orbit is special.
Indeed, it coincides with the icosahedron's face-centers.

There are two other special icosahedral orbits: $12$ vertices and $30$
edge-midpoints.  For each of these sets, there is an associated map that is
both $2$-periodic and critical exactly on the respective orbits.  A natural
question is whether there are icosahedrally symmetric maps that share
these properties for an orbit consisting of $60$ points.  After laying out the
essential geometric  and algebraic infrastructure, the discussion will conduct a regulated
exploration of the question, one that leads to the discovery of two maps whose geometric properties are associated with the soccer ball.

\section{Icosahedral geometry, invariants, and maps}

The treatment here appears in summary form.  Details can be found in
\cite{klein} and \cite{dm}.

One of the five regular polyhedra, the icoshedron consists of $20$ triangles
five of which meet at each of twelve vertices (Figure~\ref{fig:icos}).
Situating the icosahedron with vertices at $0$ and $\infty$ and a
line of reflective symmetry along the real axis, the remaining ten vertices are:
\begin{align*}
\frac{1\pm \sqrt{5}}{2}\ ,\
\frac{-5+\sqrt{5}\pm i \sqrt{10 (5+\sqrt{5})}}{2(\pm \sqrt{5}-5)}\ , \text{and}\\
\frac{2 (5+\sqrt{5}) \pm i (5-\sqrt{5}) \sqrt{2 (5+\sqrt{5})}}{4(5\pm
\sqrt{5})}
\end{align*}
where all choices of signs are made.

The group of $60$ projective transformations \I\ that act on \CP{1} as
rotational symmetries of the icosahedron is isomorphic to the alternating group
\Alt{5}. Under the icosahedral action, a set of five tetrahedra---disjoint
quadruples of the $20$ face-centers---undergoes the \Alt{5} permutations.
Actually, there are two chiral systems of five tetrahedra which can be paired to
form five cubes.  Along with the vertices and face-centers, the $30$
edge-midpoints complete the special icosahedral orbits:  points that are
fixed by a  member of \I\ other than the identity.

\begin{figure}[ht]

\resizebox{3in}{!}{\includegraphics{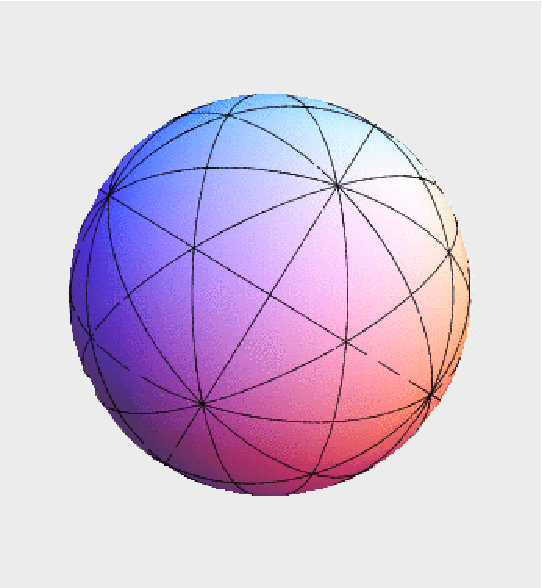}}

\caption{Regular icosahedral structure rendered on the sphere: a configration of
$15$ great circles that act as mirrors of reflective symmetry. Selecting certain
arcs along the circles carves the sphere into either $12$ pentagons, $20$
triangles, or $30$ rhombuses.}

\label{fig:icos}

\end{figure}

We need to address a technical issue here.  The icosahedral action on \CP{1} can
be lifted to a subgroup \Ih\ of $GL_2(\C{})$ with order $240$ and the structure
of a \emph{complex reflection group}; \Ih\ is generated by $30$ order-$2$
complex reflections on \C{2}.  (See \cite{st}.)  Taking
$$\pi:GL_2(\C{})\longrightarrow PGL_2(\C{})$$
to be the standard projection
$$\pi(\Ah)=\bigl\{\alpha\,\Ah\ |\ \alpha\in \C{}-\{0\}\bigr\},$$
each projective fiber $\pi|_{\Ih}^{-1}(A)$ of the restriction of $\pi$ to \Ih\ contains four linear maps
$$i^k \Ah\ \text{for}\ k=0,\dots,3\ \text{and where}\ \det{\Ah}=1.$$
Of course, an \I-orbit in \CP{1} of size $n$ lifts to an \Ih-orbit in \C{2} with
$4n$ elements.  In the treatment that follows, we work in the lifted setting.

Associated with the twelve-point \I-orbit $\OO_{12}$ in \CP{1} are twelve linear forms
$$L_k=a_k\,x+b_k\,y \qquad k=1,\dots,12$$
that define the members of $\OO_{12}$ as lines in \C{2}.  Moreover, we can choose the $L_k$ so that, up to multiplicative character, \Ih\ permutes the forms.  This means that for all $\Ah\in \Ih$,
$$L_k\circ \Ah=i^\ell\,L_{k'}$$
where $\ell$ depends on \Ah.  However, when an element of \Ih\ is applied to the degree-$12$ polynomial
$$F(x,y)=\prod_{k=1}^{12} L_k=x y \bigl(x^{10}-11\,x^5 y^5-y^{10}\bigr),$$
the characters cancel and $F$ is \Ih-invariant.  That is,
$$F(\Ah(x,y))=F(x,y)\qquad \text{for all $\Ah\in \Ih$}.$$
Alternatively, we construct an invariant in affine terms:
$$F(z)=\prod_{k=1}^{11} (z-\lambda_k)$$
where $z=\dfrac{x}{y}$ is an affine coordinate on \C{}, and
$\lambda_k=-\dfrac{b_k}{a_k}$
are the vertices of the icosahedron.  Note that this gives a degree-$11$
polynomial since one of the vertices is $\infty$ (taken to be $\lambda_{12}$).

From classical invariant theory, we obtain a second \Ih-invariant
$$H(x,y)=\text{hess}(F(x,y))=\ x^{20}+228\,x^{15} y^5+494\,x^{10}y^{10}-228\,x^5 y^{15}+y^{20}$$
where
$$
\text{hess}(F(x,y))=
\det{\begin{pmatrix}F_{xx}&F_{xy}\\F_{xy}&F_{yy}\end{pmatrix}}
$$
and subscripted variables indicate partial differentiation .
Furthermore, reflection group theory tells us that $F$ and $H$ are algebraically
independent generators of the ring of \Ih-invariant forms $\C{}[x,y]^{\Ih}$.
Note that the icosahedral face-centers appear as $\{H=0\}$.

A \emph{relative \Ih-invariant} arises from the determinant of the Jacobian of
$F$ and $H$ relative to $x,y$:
\begin{align*}
T(x,y)&=\ J_{(F,H)}=\ \det{\begin{pmatrix}F_x&F_y\\H_x&H_y\end{pmatrix}}\\
&=\ x^{30}-522\,x^{25} y^5-10005\,x^{20}y^{10}-10005\,x^{10} y^{20}+522\,x^5 y^{25}+y^{30}.
\end{align*}
Relative invariance means that a multiplicative character appears when \Ih\ acts on $T$.  Specifically,
$$T\circ \Ah=\pm T$$
where the plus sign occurs for the determinant-$1$ subgroup \It\ of \Ih\ whose order
is $120$.

While $F$ and $T$ are algebraically independent, the three invariants satisfy a
relation in degree $60$:
$$1728\,F^5-H^3+T^2=0.$$
We can now take up the matter of icosahedrally symmetric maps.  The treatment is simpler in the setting of \It.

A map $f:\C{2}\rightarrow\C{2}$ is \emph{\It-equivariant} if it preserves
group-orbit structure; that is, $f$ sends an \It-orbit to an \It-orbit.
Expressed
algebraically,
$$f\circ \At=\At\circ f \qquad \text{for all $\At \in \It$.}$$
Classical invariant theory provides for the construction of equivariants from
invariants.  In the present case, the relevant cross-product-like procedure is
as follows.
\begin{prop}

If $G(x,y)$ is \It-invariant,
$$
\times G:=\bigl(G_y,-G_x\bigr)
$$
is a relative \It-equivariant.

\end{prop}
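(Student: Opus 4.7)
The plan is to differentiate the invariance identity $G(\At v)=G(v)$ and reinterpret the resulting transformation law for $\nabla G$ as one for $\times G$. Writing $\nabla G=(G_x,G_y)^T$ as a column vector, I would first observe the algebraic repackaging
$$\times G = J\,\nabla G,\qquad J=\begin{pmatrix}0&1\\-1&0\end{pmatrix},$$
where $J$ is the standard symplectic form on \C{2}. The proposition will then follow from combining the chain rule with the fact that $SL_2(\C{})$ preserves $J$.

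Carrying this out, apply the chain rule to $G(\At v)=G(v)$ for $v=(x,y)^T$. This yields $\At^T\,(\nabla G)(\At v)=\nabla G(v)$ and hence
$$(\nabla G)(\At v)=(\At^T)^{-1}\,\nabla G(v).$$
Composing on the left with $J$ gives $(\times G)(\At v)=J(\At^T)^{-1}\nabla G(v)$. Now invoke the linear-algebra identity
$$\At^T J\,\At=J\qquad\Longleftrightarrow\qquad J(\At^T)^{-1}=\At\, J,$$
valid for every $\At\in SL_2(\C{})$ and easily checked by a one-line $2\times 2$ computation. Since \It\ is precisely the determinant-one subgroup of \Ih, this applies to every $\At\in\It$, so
$$(\times G)(\At v)=\At\,J\,\nabla G(v)=\At\,(\times G)(v),$$
i.e., $(\times G)\circ\At=\At\circ(\times G)$, which is the desired equivariance.

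The calculation amounts to one line of chain rule followed by one line of linear algebra, so there is no genuine obstacle. The only conceptual point worth flagging is the role of the determinant: for a general $\Ah\in GL_2(\C{})$ the same computation produces an additional factor of $(\det\Ah)^{-1}$, and it is the restriction to $SL_2$---equivalently, passing from \Ih\ to \It---that collapses this multiplier to~$1$. This is what justifies the ``relative'' terminology in the statement and explains why the construction fits cleanly with the determinant-one framework set up in the preceding discussion.
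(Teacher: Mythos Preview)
Your argument is correct. The paper does not actually prove this proposition; it attributes the construction to classical invariant theory and proceeds directly to computing $\phi=\times F$ and $\eta=\times H$. Your chain-rule-plus-symplectic-form argument is the standard way to fill this gap, and every step checks: differentiating the invariance identity gives $\At^T(\nabla G)(\At v)=\nabla G(v)$, and the $SL_2$ identity $\At J\At^T=J$ (equivalently $J(\At^T)^{-1}=\At J$) converts the contragredient transformation law for $\nabla G$ into genuine equivariance for $J\,\nabla G=\times G$.

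One small remark on your closing paragraph: what you have actually established is \emph{strict} \It-equivariance, $(\times G)\circ\At=\At\circ(\times G)$, with no character at all. The factor $(\det\Ah)^{-1}$ you flag appears only over \Ih, not over \It. So if ``relative \It-equivariant'' is read literally as ``equivariant up to a character of \It,'' that character is trivial and you have proved more than the statement asks. Your reading---that the word ``relative'' gestures toward the determinant character waiting at the \Ih\ level---is a plausible gloss on the paper's phrasing, but be aware that for \It\ itself your computation delivers honest equivariance.
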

Applied to the \emph{basic invariants}:
\begin{align*}\nonumber
\phi=\ \times F=&\ \bigl(-x^{11}+66\,x^6 y^5+11\,x y^{10},
11\,x^{10} y-66\,x^5 y^6-y^{11}\bigr)\\ \nonumber
\eta=\ \times H=&\ \bigl(-57\,x^{15} y^4-247\,x^{10}y^9+171\,x^5
y^{14}-y^{19},\\
&\ x^{19}+171\,x^{14} y^5+247\,x^9 y^{10}-57\,x^4 y^{15}\bigr)
\end{align*}
These two maps are basic in that they generate the module of \It-equivariants
over \It-invariants.

Passing to the projective line, the basic maps also admit a geometric description. In the case of $\phi$, each triangular
dodecahedral face $\mathcal{F}$ wraps and twists canonically around the dodecahedral structure in such a way that
$$\phi(\mathcal{F})=\CP{1}-\mathcal{F}_a^\circ$$
where $\mathcal{F}_a^\circ$ is the interior of the face $\mathcal{F}_a$ antipodal to $\mathcal{F}$.  Canonical behavior means that each edge is,  as a set, sent to the antipodal edge while vertices and edge-midpoints map to their antipodes.  By symmetry, face-centers are fixed points.  Around each vertex are three interior pentagonal sectors each of which $\phi$ opens up onto two of the pentagonal sectors at the antipodal vertex.  Accordingly, the $20$ dodecahedral vertices are critical and period-$2$.  Moreover, since the degree of the critical polynomial $J_\phi$ is $20=2\cdot 11-2$, the vertices exhaust the critical set.  This strong kind of \emph{critical finiteness} produces a map in which the basins of attraction at the superattracting vertices have full measure on the sphere.  By considering faces of the icosahedron, analogous phenomena occur in the case of $\eta$ under which the $12$ vertices have a critical multiplicity of three.

But, you might wonder: What about the identity map?  It must be
\It-equivariant since it commutes with everything.  The answer lies in the relation on the basic invariants which manifests itself in the regime of maps by considering the degree-$31$ family of maps that are equivariant under \It:
$$g=a\,T\,\epsilon+b\,H\,\phi+c\,F\,\eta.$$
Here, $\epsilon$ is the identity map, which is projectively equivalent to
$T\,\epsilon$. By elementary invariant theory, the Jacobian determinant $J_f$
of an \It-equivariant $f$ is an \It-invariant so that, for the family of
``$31$-maps,'' $J_f$ is a $3$-parameter form of degree $60$.  However, due to
the relation satisfied by $F$, $H$, and $T$, the family of degree-$60$
\It-invariants,
$$\ell\,F^5+m\,H^3+n\,T^2,$$
is two-dimensional.  Thus, the maps in $g$ must also satisfy a relation, the
explicit form of which is $$5\,T\,\epsilon+5\,H\,\phi-3\,F\,\eta=(0,0).$$
Hence, $\phi$ and $\eta$ produce the map $T\epsilon$ that gives the identity on
\CP{1}.

Since the degree of the critical polynomial $J_\phi$ is $20$,
the critical set $\CC{\phi}=\{J_\phi=0\}$ consists of the ``$20$-points''
$\{H=0\}$.  (Note: when describing orbits, we refer to points in projective
space, that is, the sphere.)  By similar considerations, $\CC{\eta}=3\cdot
\{F=0\}=3\cdot \{\text{$12$-points}\}$ where the coefficient indicates
three-fold
multiplicity at each critical point.

For any group action $G$, when a point $p$ is fixed by some member $A$ of $G$,
the image of $p$ under a holomorphic $G$-equivariant is also fixed by $A$.  In
the icosahedral case, the only two possibilities for the equivariant image of a
point in a special orbit are to be fixed or exchanged with its antipode.  In
the latter case, the point has period two.  Table~\ref{tab:spec_pts} collects
the actual behavior of the basic equivariants on the special orbits.

\begin{table}[ht]
\begin{tabular}{c|c|c|c}
orbit size $\rightarrow$&$12$&$20$&$30$\\
map $\downarrow$&&&\\
\hline
$\phi$&fixed&period-$2$&period-$2$\\
$\eta$&period-$2$&fixed&period-$2$
\end{tabular}
\caption{Special orbits under basic \I-equivariants}
\label{tab:spec_pts}
\end{table}

Combining criticality and periodicity yields, in each case, a critically-finite map---the orbit of each critical point is
finite---for which the critical $2$-cycles are superattracting.  In such a circumstance, the theory of one-dimensional complex dynamics informs us that the basins of attraction for the critical cycles are dense---indeed, have full
measure---on the sphere.

\section{Icosahedral maps with sixty critical points}

The task is to construct maps with periodic critical points residing in
a $60$-point orbit.  A critical polynomial with $60$ roots requires a map with algebraic degree $31$.  Our
approach will be to treat a point in the critical set as a parameter to be determined by imposing conditions on the family of $31$-maps.  Results of \emph{Mathematica} computations are labeled ``Facts."  (Files that establish the facts are available at \cite{web}.)

Let
$$f_{(a,b)}=a\,H\,\phi+b\,F\,\eta$$
be the family of $31$-maps.  We want to determine coefficients $(A,B)$ and
points $p$ that satisfy the conditions:
\begin{enumerate}

\item $p$ belongs to a $60$-point orbit $\I(p) \subset \CP{1}$

\item $\I(p)$ is the critical set of $f_{(A,B)}$

\item $f_{(A,B)}(\I(p))=\I(p)$ (implies that $p$ is periodic).

\end{enumerate}

We can readily identify three $31$-maps that degenerate to maps on \CP{1} of
lower degree, namely,
$$f_{(1,0)}=H\phi \qquad f_{(0,1)}=F\eta \qquad f_{(5,-3)}=T\epsilon.$$
In each of these cases, one of the special icosahedral orbits constitutes the
critical set with the appropriate algebraic multiplicity: three, five, and two
respectively.

\subsection{A specific approach}

Finding all maps that satisfy the desired conditions is a formidable
computational task. Accordingly, the search will be subject to a further
restriction:
$$
(4)\  f_{(A,B)}\ \text{exchanges $p$ with its antipode $\widetilde{p}$ and
hence, $p$ is $2$-periodic}.
$$

Working in \C{2}, let $p=(z,1)$ so that
$\widetilde{p}=(1,-\overline{z})$.  Requiring that $f_{(a,b)}(p)=\widetilde{p}$
determines $a$ and $b$ as functions of $z$ and $\overline{z}$.
When $p$ is forced to be critical by setting
$$J(z):=J_{f_{(a,b)}}(p)=0,$$
the polynomial $J(z)$ is divisible by $F(z)$, $H(z)$, and $T(z)$.  This outcome
follows from the observation above that the maps $H\phi$, $F\eta$, and
$T\epsilon$ along with the appropriate special orbit each satisfy the ``critical
equation.''

\begin{fact}

The result of dividing $J$ by the product $FHT$ is
\begin{align*}
M(z)=& z^{60} \overline{z}-285 z^{56} \overline{z}^2+2820 z^{55}
\overline{z}+3410
   z^{54}+3800 z^{51} \overline{z}^2-37794 z^{50} \overline{z}\\
&+83700 z^{49}-3799050 z^{46} \overline{z}^2+7302980 z^{45}\overline{z}+12227175
z^{44}\\
&-15405600 z^{41} \overline{z}^2+23401665 z^{40}
   \overline{z}+15580600 z^{39}\\
&-143079850 z^{36} \overline{z}^2+80976024 z^{35} \overline{z}+168348600
z^{34}\\
&+203866260 z^{31} \overline{z}^2-203866260 z^{29}+168348600 z^{26}
\overline{z}^2\\
&+80976024 z^{25} \overline{z}-143079850 z^{24}-15580600 z^{21}
\overline{z}^2\\
&-23401665 z^{20} \overline{z}+15405600 z^{19}+12227175
   z^{16} \overline{z}^2+7302980 z^{15} \overline{z}\\
&-3799050 z^{14}-83700 z^{11} \overline{z}^2+37794 z^{10} \overline{z}-3800
   z^9+3410 z^6 \overline{z}^2\\
&+2820 z^5 \overline{z}-285 z^4-\overline{z}+1.
\end{align*}

\end{fact}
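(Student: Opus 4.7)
The Fact is a polynomial identity whose content is (i) divisibility $F\,H\,T\mid J(z,\bar z)$ and (ii) the explicit coefficients of the quotient; part (ii) is a direct symbolic computation once (i) is in hand, so the plan centers on (i).  First, I translate the exchange condition into a linear equation in $(a,b)$: with $p=(z,1)$ and $\tilde p=(1,-\bar z)$, the projective parallelism $f_{(a,b)}(p)\parallel\tilde p$ is the single equation $\alpha\,a+\beta\,b=0$, where
$$\alpha=-\bar z(H\phi)_1(z,1)-(H\phi)_2(z,1),\qquad \beta=-\bar z(F\eta)_1(z,1)-(F\eta)_2(z,1).$$
The scalar factor of $H$ in every component of $H\phi$ gives $\alpha=H\,\tilde\alpha(z,\bar z)$, and likewise $\beta=F\,\tilde\beta(z,\bar z)$.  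The polynomial normalization $(a,b)=(\beta,-\alpha)=(F\tilde\beta,-H\tilde\alpha)$ builds $FH$ into the parametrization.

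Second, I fix the Jacobian's quadratic structure.  Linearity of $f_{(a,b)}$ in $(a,b)$ gives
$$J_{f_{(a,b)}}=a^2 J_{H\phi}+ab\,J_{\mathrm{cross}}+b^2 J_{F\eta}.$$
From the paragraph preceding the Fact, $J_{H\phi}=c_1 H^3$, $J_{F\eta}=c_2 F^5$, and $J_{T\epsilon}=c_3 T^2$ for specific integer constants.  Evaluating the quadratic form at $(a,b)=(5,-3)$ and equating the result with $c_3(H^3-1728 F^5)$ via the relation $1728 F^5-H^3+T^2=0$ pins down $J_{\mathrm{cross}}=\delta H^3+\epsilon F^5$ with $\delta=(25c_1-c_3)/15$ and $\epsilon=(9c_2+1728c_3)/15$.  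Substituting $(a,b)=(F\tilde\beta,-H\tilde\alpha)$ then gives
$$J(z,\bar z)=FH\,\bigl[c_1 FH^2\,\tilde\beta^2+c_2 F^4 H\,\tilde\alpha^2-(\delta H^3+\epsilon F^5)\,\tilde\alpha\tilde\beta\bigr],$$
making $FH\mid J$ manifest.

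Third, I extract the factor $T$.  At any edge midpoint $z_0$ (a root of $T$) one has $H(z_0)^3=1728 F(z_0)^5$, and since $\phi(z_0),\eta(z_0)$ are both parallel to $\tilde p_0=(1,-\bar z_0)$, each of $\tilde\alpha(z_0,\bar z)$ and $\tilde\beta(z_0,\bar z)$ equals $(\bar z_0-\bar z)$ times a nonzero scalar; their ratio $\eta_1(z_0)/\phi_1(z_0)=2880\,F^4/H^2$ follows from differentiating the relation along $\{T=0\}$, which yields $\nabla H=2880(F^4/H^2)\nabla F$ and hence $\eta=2880(F^4/H^2)\phi$.  Substituting this ratio and $H^3=1728 F^5$ into the bracket collapses it to $F^4 H \bigl[4800c_1+c_2-2880\delta-\tfrac{5}{3}\epsilon\bigr]$, and the definitions of $\delta,\epsilon$ make this vanish identically (all $c_i$ cancel).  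Hence $J(z_0,\bar z)\equiv 0$ and $T\mid J/(FH)$, completing (i).  The quotient $M(z,\bar z)=J/(F H T)$ is then obtained by polynomial division in $z$ treating $\bar z$ as an independent indeterminate; the coefficients read off in \emph{Mathematica} match the displayed polynomial.  The principal obstacle is the structural cancellation in the $T$-step — the only place where the relation $1728 F^5-H^3+T^2=0$ plays a decisive role beyond the separate statements about $F$ and $H$; the rest is bookkeeping.
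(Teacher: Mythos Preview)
Your argument is sound and, at the level of strategy, matches the paper's: both attribute the divisibility $FHT\mid J$ to the three degenerate members $H\phi$, $F\eta$, $T\epsilon$ of the $31$-family, and both hand the explicit quotient $M$ to a computer-algebra check.  The paper, however, offers only a one-sentence justification (``the maps $H\phi$, $F\eta$, and $T\epsilon$ along with the appropriate special orbit each satisfy the critical equation''), and labels the result a \emph{Fact} to signal that the verification is ultimately computational.  Your write-up goes substantially further: the polynomial normalization $(a,b)=(\beta,-\alpha)=(F\tilde\beta,-H\tilde\alpha)$ makes the factor $FH$ visible by inspection, and your treatment of the $T$-factor actually closes a gap the paper's sentence leaves open.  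At an edge-midpoint $z_0$ the exchange condition holds for \emph{every} $(a,b)$, so the paper's remark only yields $J(z_0,\bar z_0)=0$; divisibility of $J(z,\bar z)$ by $T(z)$ with $\bar z$ treated as an independent indeterminate needs $J(z_0,\bar z)=0$ for all $\bar z$, which is exactly what your gradient-of-the-syzygy computation supplies.  A cleaner way to phrase that step: since $\nabla H=2880(F^4/H^2)\nabla F$ on $\{T=0\}$, one finds $[F\tilde\beta:-H\tilde\alpha]=[5:-3]$ there, so the parametrized map specializes to (a scalar multiple of) $T\epsilon$ and its Jacobian vanishes with $T^2$.

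One small slip: $f_{(5,-3)}=-5\,T\epsilon$, so evaluating your quadratic form at $(5,-3)$ gives $25\,J_{T\epsilon}=25c_3T^2$, not $c_3T^2$; your displayed $\delta,\epsilon$ are therefore off by a rescaling of $c_3$.  This is harmless---the cancellation $4800c_1+c_2-2880\delta-\tfrac{5}{3}\epsilon=0$ is linear and the $c_3$-terms drop out regardless of that scale---but it is worth correcting for the record.
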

Alternatively, we can produce special $31$-maps by solving over the reals the
equations
$$R(u,v)=0 \qquad S(u,v)=0 $$
where  $z=u+i\,v$ and $M=R+i\,S$.  The high degree involved thwarts efforts to
bring techniques of algebraic
geometry to bear on the problem.

We can, however, establish a geometric property that a solution must possess.
Indeed,
this property follows from a consequence of conditions (3) and (4) that we
demand of a special
$31$-map: $\widetilde{p}\in \I(p)$.

First, an observation is  appropriate.  The holomorphic icosahedral
action \I\ admits a degree-$2$ extension to a group $\I\, \cup\, \alpha\,\I$
where
$\alpha$ is the antipodal map.  The antiholomorphic coset $\alpha\,\I$ contains
reflections through $15$ great circles---the icosahedron's mirrors of reflective
symmetry as illustrated in Figure~\ref{fig:icos}.

\begin{thm}
Suppose $p\in\CP{1}$ with $\alpha(p):=\widetilde{p}\in \I(p)$.  Then $p$
(and $\widetilde{p}$) belong to a great circle of reflective icosahedral
symmetry.
\end{thm}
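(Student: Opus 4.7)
The plan is to turn the hypothesis $\widetilde{p}\in\I(p)$ into a fixed-point statement for some element of the antiholomorphic coset $\alpha\,\I$, and then classify which elements of $\alpha\,\I$ admit fixed points on the sphere.

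First I would observe that $\widetilde{p}\in\I(p)$ means there is a rotation $\sigma\in\I$ with $\sigma(p)=\alpha(p)$. Since $\alpha$ is an involution, composing with $\alpha$ yields $(\alpha\sigma)(p)=p$. Thus the element $\alpha\sigma\in\alpha\,\I$ fixes $p$, and because $\alpha$ is orientation-reversing on the sphere while $\sigma$ is orientation-preserving, $\alpha\sigma$ is an orientation-reversing isometry of $S^2$.

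The key step is then a classification: among orientation-reversing isometries of $S^2$, only reflections through great circles have fixed points. Concretely, any element of $O(3)$ of determinant $-1$ is conjugate to a rotoreflection $\text{diag}(-1)\oplus R(\theta)$ with eigenvalues $-1,e^{i\theta},e^{-i\theta}$. If $\theta=0$ the element is a reflection through the great circle perpendicular to the distinguished axis, and fixes that entire circle pointwise. If $\theta=\pi$, all eigenvalues equal $-1$ and the element is the antipodal map, which has no fixed point on $S^2$. For any other $\theta$, only the axis is preserved as a set, but its two sphere points are swapped by the reflection factor, so again no fixed points exist. Consequently $\alpha\sigma$ must be a pure reflection, and the great circle it fixes is one of the mirrors in the extended icosahedral group $\I\cup\alpha\,\I$, i.e.\ one of the $15$ great circles of reflective icosahedral symmetry depicted in Figure~\ref{fig:icos}.

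Finally, since any great circle is invariant under the antipodal map, $\widetilde{p}=\alpha(p)$ lies on the same mirror as $p$, completing the conclusion. The only place where real content enters is the short eigenvalue analysis that rules out rotoreflections with $\theta\neq 0$ and the antipodal map as sources of fixed points; no heavy computation is needed, and the main obstacle is really just stating that classification cleanly enough to identify the fixing element as a mirror reflection rather than a generic rotoreflection.
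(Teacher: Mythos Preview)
Your argument is correct, but it takes a different route from the paper's. You pass at once to the antiholomorphic side: from $\sigma(p)=\alpha(p)$ you deduce that $\alpha\sigma\in\alpha\,\I$ fixes $p$, and then the $O(3)$ eigenvalue classification of orientation-reversing isometries forces $\alpha\sigma$ to be a pure reflection, hence one of the fifteen mirrors in $\alpha\,\I$. The paper instead stays on the rotational side. It argues that the only elements of \I\ capable of sending any point to its antipode are the half-turns about edge-midpoint axes; having identified $A$ as such a half-turn, it observes that $A$ acts as the antipodal map on its equatorial great circle $\mathcal{M}$ (which is a mirror) while preserving each hemisphere bounded by $\mathcal{M}$, so that $A(q)=\widetilde{q}$ is impossible off $\mathcal{M}$ and $p$ must lie on it. The two arguments are dual in a precise sense---your reflection $\alpha\sigma$ and the paper's half-turn $A$ satisfy $\alpha\sigma=\alpha A$ and share the same invariant great circle---but yours buys a cleaner, group-theoretic statement (fixed points of $\alpha\,\I$ lie on mirrors) via a one-time classification, while the paper's buys a more explicit geometric picture of \emph{which} mirror is involved, tied directly to the edge-midpoint axis of $A$.
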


\begin{proof}
Since $\widetilde{p}$ belongs to the \I-orbit of $p$, there is an $A\in \I$
with $A(p)=\widetilde{p}$.  The only element of the rotational icosahedral
group that moves a point to its antipode is a half-turn about an axis through
antipodal edge-midpoints.  Furthermore, $A(\widetilde{p})=p$.  Indeed, $A$ acts
as the antipodal map on a great circle that functions as a mirror $\mathcal{M}$
under the extended
icosahedral group.  Moreover, each hemisphere determined by $\mathcal{M}$
is preserved by $A$ so that for any point $q$ away from $\mathcal{M}$,
$A(q)\neq \widetilde{q}$.  Hence, $p,\widetilde{p}\in \mathcal{M}$.
\end{proof}
In the chosen coordinates, the real axis \RR---as an \RP{1}---is a great circle
of reflective icosahedral symmetry.  For any solution $p$ to $R=S=0$, every
member of $p$'s orbit $\I(p)$ is also a solution.  Since $p$ belongs to an
icosahedral mirror and \I\ is transitive on mirrors, some element in $\I(p)$
lies on the real axis.

\begin{fact}

We can find all $31$-maps with the desired
properties by solving over the real numbers the equation
\begin{align*}
 M|_{z=\overline{z}}=&\ z \bigl(z^2+1\bigr)
   \bigl(z^2-z-1\bigr) \bigl(z^4-2 z^3-6
   z^2+2 z+1\bigr)\\
& \bigl(z^4+3 z^3-z^2-3
   z+1\bigr) \bigl(z^{48}+14 z^{46}-280
   z^{45}+161 z^{44}\\
&-1039 z^{43}+364
   z^{42}-666 z^{41}+621 z^{40}+27291
   z^{39}-32823 z^{38}\\
&+394034
   z^{37}-241717 z^{36}+557621
   z^{35}-499383 z^{34}+392493
   z^{33}\\
&+478383 z^{32}-854138
   z^{31}+1147057 z^{30}-3389037
   z^{29}\\
&+3865560 z^{28}+1191562
   z^{27}+5421980 z^{26}+744833
   z^{25}\\
&+10215020 z^{24}-744833
   z^{23}+5421980 z^{22}-1191562
   z^{21}\\
&+3865560 z^{20}+3389037
   z^{19}+1147057 z^{18}+854138
   z^{17}\\
&+478383 z^{16}-392493
   z^{15}-499383 z^{14}-557621
   z^{13}-241717 z^{12}\\
&-394034
   z^{11}-32823 z^{10}-27291 z^9+621
   z^8+666 z^7+364 z^6\\
&+1039 z^5+161
   z^4+280 z^3+14 z^2+1\bigr).
\end{align*}

\end{fact}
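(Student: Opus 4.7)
The proof has two distinct parts: a conceptual reduction to a single real equation, followed by an explicit polynomial factorization.

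For the reduction, I would invoke the preceding theorem together with the transitivity of \I\ on the $15$ mirrors. Any solution $p=(z,1)$ of conditions (3) and (4) satisfies $\widetilde p \in \I(p)$ and so, by the theorem, lies on a great circle of reflective icosahedral symmetry. The $15$ such circles form a single \I-orbit (each mirror is indexed by an antipodal pair among the $30$ edges, and \I\ acts transitively on edges, so the stabilizer of a mirror has index $15$). Because the solution conditions $R=S=0$ are themselves \I-invariant, every orbit of solutions has a representative on any chosen mirror, in particular on \RR. Hence imposing the additional constraint $\overline z = z$ loses no orbits, and finding all $31$-maps with the desired properties reduces to finding all real roots of the univariate polynomial $M|_{\overline z = z}$.

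The explicit factorization is then a computer-algebra exercise: substitute $\overline z \mapsto z$ in the expression for $M$ stated in the previous Fact and factor the resulting degree-$61$ polynomial over $\mathbf{Q}[z]$, yielding the product whose factor degrees sum $1+2+2+4+4+48=61$. The low-degree factors are expected to cut out the real points of the three classical special icosahedral orbits lying on \RR\ (vertices, face-centers, and edge-midpoints); for instance, the roots of $z^2-z-1$ are $(1\pm\sqrt{5})/2$, the two icosahedral vertices on the real axis. These spurious contributions persist because the division by $FHT$ that produced $M$ lowers but does not entirely kill the vanishing along the classical orbits once one restricts further to \RR. The genuinely new candidates for a $60$-point critical orbit are the real roots of the irreducible degree-$48$ factor.

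The main obstacle is purely computational: factoring a degree-$61$ polynomial with very large integer coefficients, and in particular certifying the irreducibility of the degree-$48$ piece, is well beyond hand calculation and depends essentially on the \emph{Mathematica} verification mentioned in the paper's preamble. By contrast, the algebraic content---that the two-variable system $R=S=0$ can be replaced without loss by the single equation $M|_{\overline z=z}=0$---is a short corollary of the preceding theorem combined with the single-orbit structure of the $15$ icosahedral mirrors.
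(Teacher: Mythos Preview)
Your proposal is correct and mirrors the paper's own argument: the paragraph immediately preceding this Fact makes exactly your reduction---solutions lie on mirrors by the Theorem, \I\ is transitive on mirrors, hence each solution orbit meets \RR---and the factorization itself is declared a \emph{Mathematica} computation. Your side remarks about irreducibility of the degree-$48$ factor and the mechanism by which special orbits persist after the division by $FHT$ go slightly beyond what the paper asserts, but they do not affect the argument.
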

For numerical solutions to this degreee-$61$ equation, \emph{Mathematica} gives
$61$ complex values, $19$ of which are real.  Since the stabilizer $\I^{\RR}$ of
\RR\ in \I\ is a Klein-$4$ action, an $\I^{\RR}$-orbit consists of two or
four points.  Some of the real solutions occur at special \I-orbits.
Three of the points are vertices of the icosahedron. A fourth vertex
belongs to the real axis as a point at infinity.  Among the $16$ remaining real
solutions are four face-centers and four edge-midpoints the latter of which
decomposes into two $\I^{\RR}$-orbits.  Note that the special orbits occur at
intersections of icosahedral mirrors.  The remaining eight solutions on \RR\
decompose into two $4$-point $\I^{\RR}$-orbits and, thereby, indicate the
presence of two $31$-maps with $60$ critical $2$-periodic points.
 Let $\OO_1$ and $\OO_2$ denote the respective $60$-point \I-orbits.

Graphical evidence that there are only two such maps appears by application of
Newton's method on \R{2} to the polynomials $R$ and $S$.  In
Figure~\ref{fig:newt_meth}, the Newton map $n$ is iterated on a
``triangle'' of initial conditions that is a fundamental domain for the action
\I.  A colored region indicates a basin of attraction associated with
an attracting  point under $n$.  The triangle is carved into $\binom{1000}{2}$
cells and the center $(x_0,y_0)$ of each cell is the starting point for a
trajectory $\bigl(n^k(x_0,y_0)\bigr)_{k=0}^{\infty}$. If the trajectory
converges to one of the five known solutions to $R=S=0$ in fewer than $200$
iterations, the initial cell is given a color referenced to the attracting
point. Recall that such a solution is an \I-orbit.  The experimental outcome is
that the vast majority of initial conditions have a trajectory that converges to
a
known solution in the allotted number of iterations.  The three special orbits
correspond to red (vertices), green (face-centers), and blue (edge-midpoints)
while the basins of $\OO_1$ and $\OO_2$ are olive and violet.  Something to
observe is that, due to the choice
of $\mathcal{T}$, some of the attracting points are not on the real axis, but
belong to one of the other mirrors.

\begin{fact}

Recalling the parameters $(a,b)$ that distinguish icosahedral $31$-maps, any
point in the orbit $\OO_1$ yields values $(A,B)\approx (1,1.5954)$ and hence, a
map:
\begin{align*}
g=&\ f_{(A,B)}\approx \bigl(x^{31}+1980.7608 x^{26}
   y^5-26690.072 x^{21} y^{10}-129309.31
   x^{16} y^{15}\\
&+61784.718 x^{11}
   y^{20}+7547.2935 x^6 y^{25}-42.908084 x
   y^{30},\\
&-42.908084 x^{30} y-7547.2935
   x^{25} y^6+61784.718 x^{20}
   y^{11}\\
&+129309.31 x^{15}
   y^{16}-26690.072 x^{10}
   y^{21}-1980.7608 x^5
   y^{26}+y^{31}\bigr).
\end{align*}
In the case of $\OO_2$, we get $(A,B)\approx (1,.0280899)$ and
\begin{align*}
h=&\ f_{(A,B)}\approx \bigl(x^{31}+194.02245 x^{26}
   y^5-14778.483 x^{21} y^{10}-36994.493
   x^{16} y^{15}\\
& +10533.539 x^{11}
   y^{20}+2531.8876 x^6 y^{25}-11.561797 x
   y^{30},\\
&-11.561797 x^{30} y-2531.8876
   x^{25} y^6+10533.539 x^{20}
   y^{11}\\
& +36994.493 x^{15}
   y^{16}-14778.483 x^{10}
   y^{21}-194.02245 x^5
   y^{26}+y^{31}\bigr).
\end{align*}

\end{fact}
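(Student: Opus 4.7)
The plan is to extract the two pairs $(A,B)$ by turning the periodicity condition into a linear equation in $b$ once a representative real root of the degree-$48$ factor is chosen, and then to expand $f_{(A,B)}$ in the standard monomial basis on $\C{2}$.

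First, I would classify the $19$ real roots of $M|_{z=\overline{z}}$ from the previous Fact by orbit type. The small-degree factors $z$ and $z^2-z-1$ together with the two explicit quartics account for the three real vertices on $\RR$, the four real face-centres, and the four real edge-midpoints (the latter falling into the two size-two $\I^{\RR}$-orbits noted in the text); $z^2+1$ contributes no real root. This leaves the degree-$48$ factor carrying exactly the eight non-special real roots, which must split into two size-four $\I^{\RR}$-orbits corresponding to $\OO_1$ and $\OO_2$. I would compute these roots numerically and group them by applying a generator of $\I^{\RR}$.

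Second, with a representative real $p_i$ selected from each quartet ($i=1,2$), I would form
\[
f_{(a,b)}(p_i,1) = a\,H(p_i,1)\,\phi(p_i,1) + b\,F(p_i,1)\,\eta(p_i,1)
\]
and impose proportionality with the antipode $(1,-p_i)$. Cross-multiplying the two projective coordinates eliminates the scalar and gives a single linear equation in $(a,b)$; setting $a=1$ yields a closed-form value $B_i$. The derivation of $M$ from the combined criticality-plus-periodicity conditions guarantees that this $B_i$ automatically satisfies $J_{f_{(1,B_i)}}(p_i)=0$, so the critical condition comes for free. Substituting the numerical $B_i$ into $H\phi + B_i\,F\eta$ and expanding in monomials then produces the coefficient lists displayed for $g$ and $h$.

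Finally, I would confirm that each resulting map satisfies all four requirements $(1)$--$(4)$ of the section. Condition $(1)$ reduces to checking $|\I(p_i)|=60$, which holds because $p_i$ lies on exactly one mirror and on no mirror intersection. For $(2)$, $\I$-equivariance places the whole orbit in the critical set, and since any degree-$31$ equivariant has Jacobian of degree $60 = |\I(p_i)|$, equality holds with multiplicity one. Condition $(3)$ then propagates from $(4)$ by equivariance. The main obstacle will be numerical stability: since the $p_i$ arise from an apparently irreducible degree-$48$ factor, one must carry enough working precision to pin down the coefficients of $g$ and $h$ to the digits shown. A secondary check is that the two $\I^{\RR}$-quartets really produce distinct maps, which is confirmed by the visible gap $B_1\approx 1.5954$ versus $B_2\approx 0.0280899$.
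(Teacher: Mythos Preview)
Your sketch is correct and matches the paper's approach. The paper does not give a written proof of this Fact; it is one of the statements the author explicitly delegates to \emph{Mathematica} computation, and your outline---isolate the eight non-special real roots of the degree-$48$ factor, group them into two $\I^{\RR}$-quartets, solve the linear proportionality condition $f_{(1,b)}(p_i,1)\sim(1,-p_i)$ for $B_i$, then expand $H\phi+B_iF\eta$---is precisely the computation the surrounding text sets up and that the referenced files carry out.
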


\begin{conj}
The only icosahedrally symmetric $31$-maps with a $60$-point critical set whose
members are exchanged with their antipodes are $g$ and $h$.
\end{conj}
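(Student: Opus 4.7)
\noindent\emph{Proof proposal.} The plan is to upgrade the numerical enumeration behind Facts 2 and 3 into a rigorous classification of the parameter values $(A,B)$.

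First, Theorem 1.1 together with the transitivity of \I\ on the $15$ mirrors of icosahedral reflective symmetry reduces the problem to \RR: any critical $2$-periodic point $p$ of a candidate $f_{(A,B)}$ can be carried by an element of \I\ onto the real axis, and since an \I-equivariant is determined by its values on any single orbit, this move loses no parameter pairs $(A,B)$. Consequently, the admissible triples $(A,B,p)$ correspond, modulo the Klein-$4$ stabilizer $\I^{\RR}$, to real solutions of the degree-$61$ polynomial $M|_{z=\overline z}$ of Fact 2.

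Second, one accounts for all roots coming from special \I-orbits. The linear factor $z$, together with the projective point $\infty$ and the two real roots of $z^2-z-1$, produce the four real vertices $\{0,\infty,(1\pm\sqrt 5)/2\}$ of $\OO_{12}$. Direct comparison with the coordinates in Section 2 identifies the two quartic factors of $M|_{z=\overline z}$ as the minimal polynomials over $\mathbf{Q}$ of the four real face-centers and the four real edge-midpoints. These $16$ real roots (together with $\infty$) recover precisely the critical sets of the degenerate maps $H\phi$, $F\eta$, and $T\epsilon$ identified before Fact 1; none of them supports a $60$-point critical set, so they contribute no maps to the conjecture.

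The main obstacle is to prove that the remaining degree-$48$ factor $Q(z)$ of $M|_{z=\overline z}$ has \emph{exactly} eight real roots. Because $Q$ has integer coefficients, this is a decidable finite problem answered by Sturm's theorem: form the Sturm chain of $Q$ via repeated Euclidean division and count the difference in sign variations at $\pm\infty$. The difficulty is purely computational --- the intermediate polynomials have coefficients of enormous bit length --- but exact arithmetic in a computer algebra system returns a certified integer count. This step upgrades the Newton-method evidence of the preceding section into a proof; no other step in the plan requires inequalities of comparable size.

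Finally, once the count of eight real roots is established, the group $\I^{\RR}$ acts on them freely --- any nontrivial stabilizer would place the point at the intersection of two mirrors and hence in a special orbit already excluded --- so the eight points split into two $\I^{\RR}$-orbits of size four. To certify that these lie in two \emph{distinct} \I-orbits, evaluate an absolute \I-invariant such as $H^3/F^5$ on one representative of each $\I^{\RR}$-orbit; distinct values guarantee distinct $60$-point orbits $\OO_1$ and $\OO_2$. Each orbit then determines $(A,B)$ uniquely (up to common scaling) via the relation $f_{(a,b)}(p)=\widetilde p$, and comparison of the resulting algebraic numbers with the decimal approximations in Fact 3 identifies the two maps as $g$ and $h$, completing the classification.
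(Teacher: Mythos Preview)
The paper does not prove this statement at all---it is explicitly labeled a \emph{Conjecture}, and the only support offered is numerical: \emph{Mathematica}'s floating-point root-finder reports $19$ real solutions to the degree-$61$ equation, and the Newton-method basin picture (Figure~\ref{fig:newt_meth}) is presented as ``graphical evidence.''  So there is no proof in the paper to compare against; your proposal is an attempt to \emph{settle} the conjecture, not to reproduce an existing argument.

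As a strategy, your plan is sound and would genuinely close the gap the author leaves open.  The reduction to the real axis via the theorem on mirrors is exactly the right move, and the bijection between candidate maps and $\I^{\RR}$-orbits of non-special real roots of $M|_{z=\overline z}$ is correct (one should note, as you implicitly do, that for $p$ off the special orbits the antipode condition determines $(a,b)$ uniquely up to scale, so distinct $\I$-orbits give distinct maps).  The decisive step---counting the real roots of the degree-$48$ factor $Q$ by Sturm's theorem over $\mathbf Z$---is the one the paper never attempts, and it is indeed a finite exact computation.  However, you describe this step as something a computer algebra system ``returns'' rather than something you have actually executed; until the Sturm chain is run in exact arithmetic and the sign-variation count is reported, the proposal remains a plan rather than a proof.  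You should also certify, by an exact discriminant or Sturm check, that both quartic factors are totally real, and by a gcd computation that $Q$ shares no root with the special-orbit factors.

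One simplification: your final step---evaluating $H^3/F^5$ at representatives to separate the two $\I^{\RR}$-orbits into distinct $\I$-orbits---is unnecessary.  A point of $Q$'s real locus lies on exactly one mirror (it is not a special-orbit point, hence not at a mirror intersection), and a $60$-point $\I$-orbit whose points each lie on a single mirror meets the real axis in exactly one $\I^{\RR}$-orbit of size~$4$.  So once $\I^{\RR}$ acts freely on the eight real roots of $Q$, the two resulting $\I^{\RR}$-orbits are automatically contained in two distinct $60$-point $\I$-orbits, and hence give two distinct maps.
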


\begin{figure}[ht]
\resizebox{2.2in}{!}{\includegraphics[]{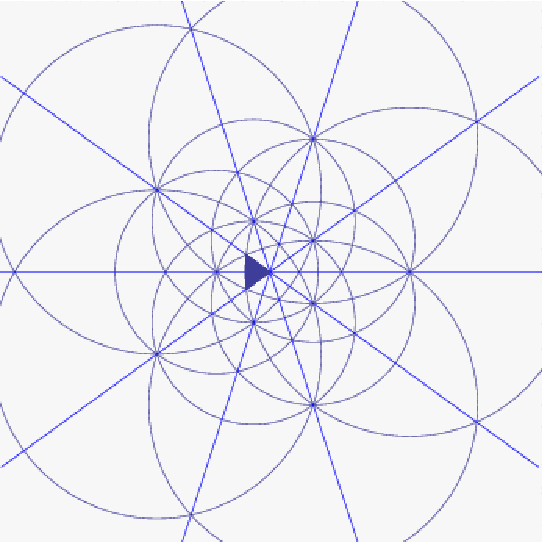}} \hspace*{.1in}
\resizebox{2.2in}{!}{ \includegraphics[]{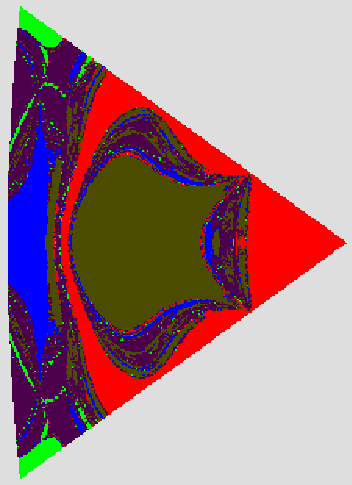}}
\caption{\emph{Left}: fundamental triangle $\mathcal{T}$ in the planar
icosahedral net.
\emph{Right}: basins of attraction on $\mathcal{T}$ for Newton's method relative
to $R$
and $S$.}
\label{fig:newt_meth}
\end{figure}

\subsection{A general approach}

We can search for maps with forward invariant critical sets that consist of
$60$ points in an another way.  Working with $31$-maps in coordinates on the
quotient space $\CP{1}/\I$, Peter Doyle found a polynomial $P$ with integer
coefficients each of whose roots corresponds to a map with the desired
properties. In particular, a certain degree-$12$ factor $Q$ of $P$ has two real
roots that result in the soccer ball maps $g$ and $h$:
\small
\begin{align*}
 Q=&\ 82308000000 w_1^{12}+570668800000 w_2 w_1^{11}+1850218100000 w_2^2
w_1^{10}\\
&+2095583560000 w_2^3 w_1^9-1409601459875 w_2^4 w_1^8-6610001558770 w_2^5
 w_1^7\\
&-8497318777952 w_2^6 w_1^6-6132369876696 w_2^7 w_1^5-2739682103040 w_2^8
 w_1^4\\
&-746454705120 w_2^9 w_1^3-110441517120 w_2^{10} w_1^2-5598533376 w_2^{11}
w_1\\
&+262766592 w_2^{12}
\end{align*}

\normalsize
He was able to show that $Q$'s Galois group is the symmetric group
$S_{12}$ so that we cannot hope to express the roots as well as the maps in
terms of radicals.

Obtaining the condition $P=0$ opens up the current project to intriguing
possibilities.  What other maps arise from the roots pf $P$?  Does each map
correspond to an icosahedrally-symmetric polyhedron such as the snub
dodecahedron?  Future research will explore these issues at length.

\subsection{Soccer ball dynamics}

To pursue the behavior of $g$ and $h$, we ask about the configuration of their
critical points.  Plotting the elements in
$$\OO_1=\{J_g=0\}\ \text{and}\ \OO_2=\{J_h=0\}$$
on the sphere (Figure~\ref{fig:crit_pts}) reveals an underlying structure to the
respective maps.  In one case, the orbit appears as vertices of a soccer
ball---a truncation of the icosahedron at the vertices. Points in the other
orbit are arranged as vertices of what we might call a ``dual'' soccer ball---a
polyhedron with $20$ triangular and $12$ decagonal faces produced by
truncating at the vertices of the dodecahedron, the icosahedron's dual.

\begin{figure}[ht]
\resizebox{2.2in}{!}{\includegraphics[]{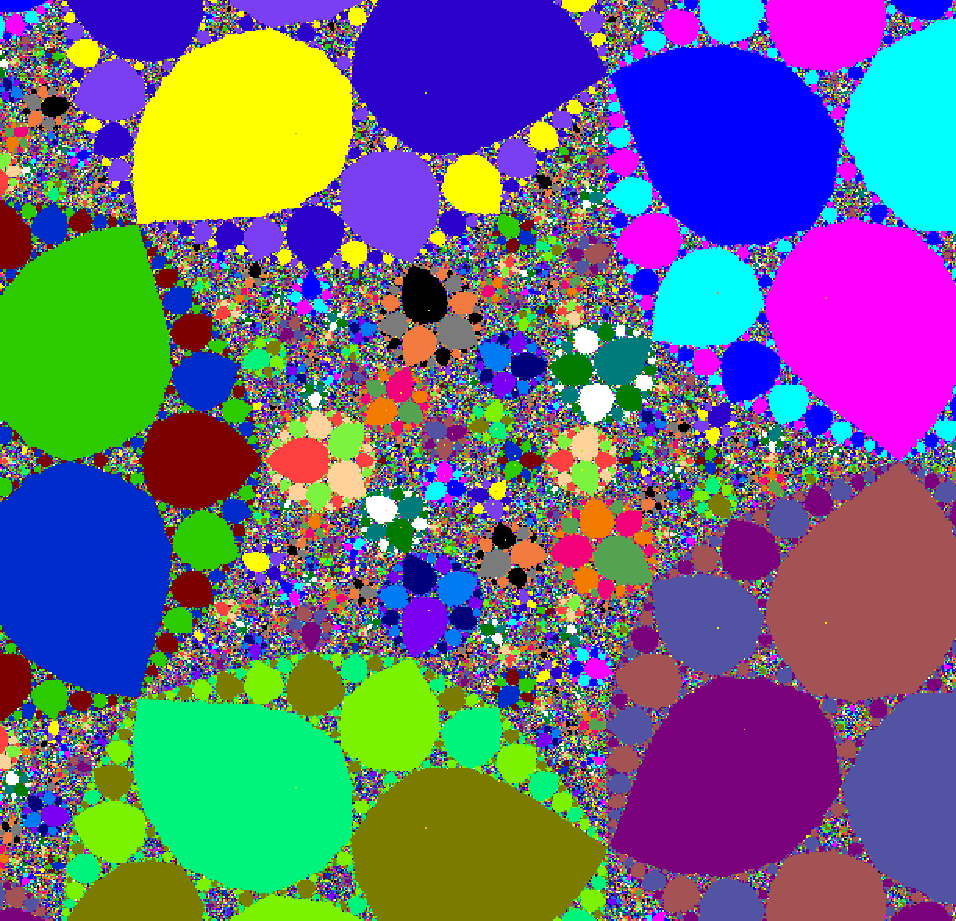}} \hspace*{.1in}
\resizebox{2.22in}{!}{ \includegraphics[]{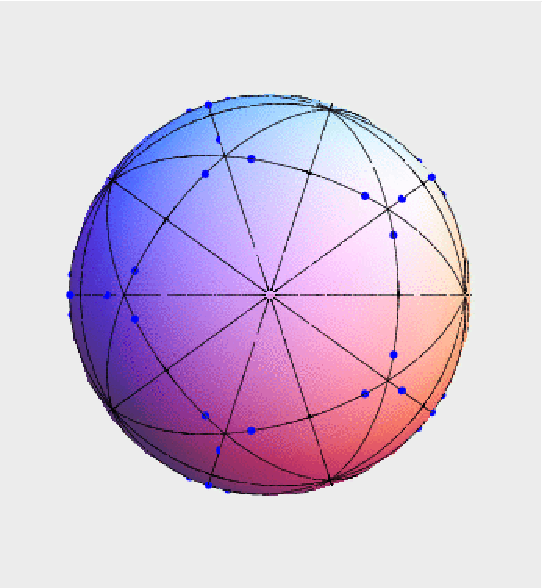}}
\caption{\emph{Left}: $60$ critical points of $g$ as vertices of a soccer ball.
\emph{Right}: $60$ critical points of $h$ as vertices of a dual soccer ball.}
\label{fig:crit_pts}
\end{figure}

If we assign a color to each superattracting $2$-cycle of soccer ball or dual soccer ball vertices and plot the basin of attraction under $g$ and $h$ in the appropriate color, the results rendered in the plane by \emph{Dynamics 2} appear in Figure~\ref{fig:basins}.  \cite{dyn2}  Inspection reveals a basin for each point appearing in Figure~\ref{fig:crit_pts}.

\begin{figure}[h]
\resizebox{2.2in}{!}{\includegraphics[]{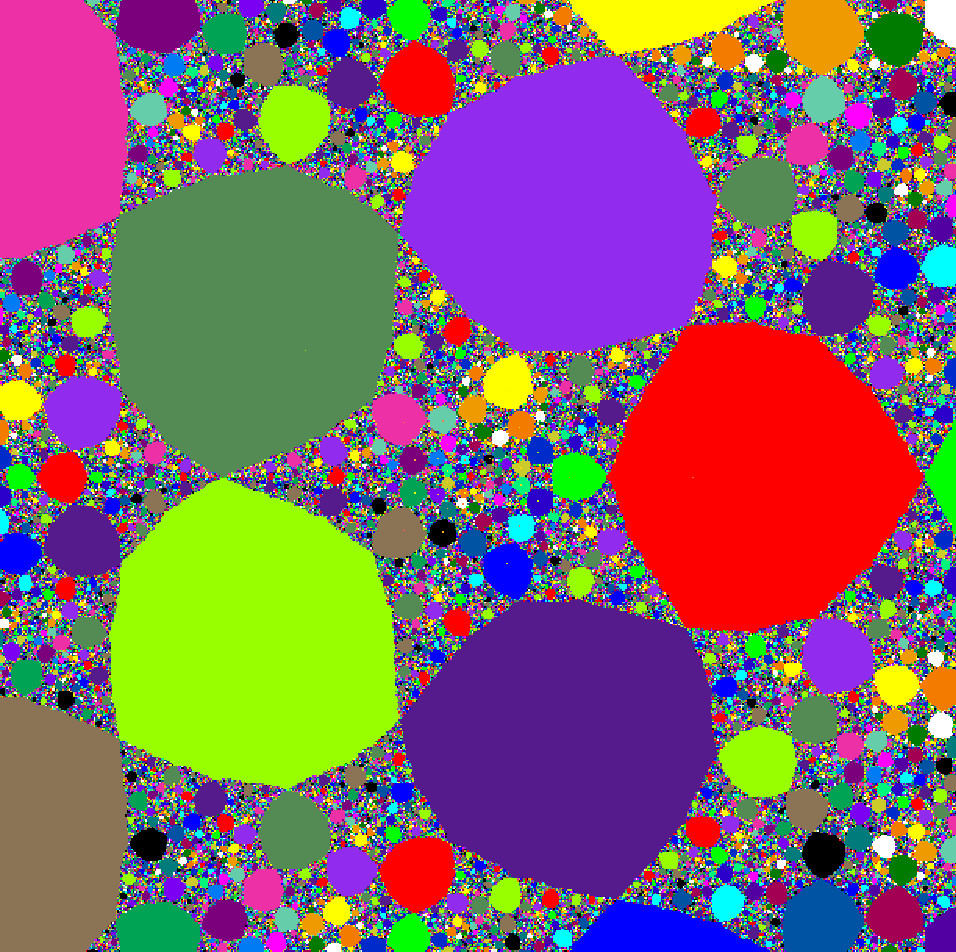}} \hspace*{.1in}
\resizebox{2.2in}{!}{ \includegraphics[]{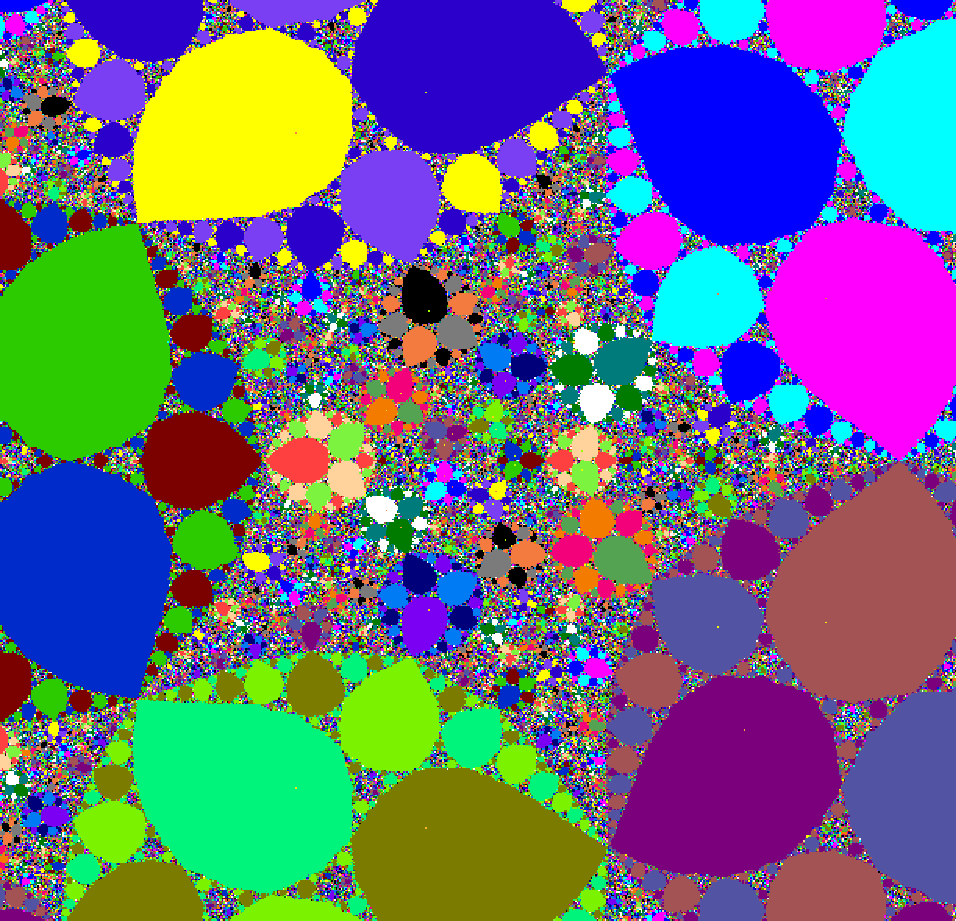}}
\caption{\emph{Left}: $30$ basins of attraction for soccer ball map.
\emph{Right}: $30$ basins of attraction for dual soccer ball map}
\label{fig:basins}
\end{figure}

For the remainder of the discussion, we will consider the soccer ball map $g$
and leave to the interested reader the analogous treatment with respect to $h$.

To complete the soccer ball structure we need to determine edges that
are dynamically meaningful.  Geometrically, the edges are boundaries between
either two hexagonal faces or a hexagonal and pentagonal face.   Each
hexagon-hexagon edge lies on an icosahedral mirror.  Since $g$ is symmetric
under the extended group $\I\,\cup\,\alpha\,\I$ and a mirror $\mathcal{M}$ is
pointwise fixed by an element of $\alpha\,\I$, $g(\mathcal{M})=\mathcal{M}$.
Accordingly, the segment on $\mathcal{M}$ between two vertices in ``adjacent''
pentagons provides a natural hexagon-hexagon edge.

A more interesting state of affairs obtains in the case of pentagon-hexagon
edges.  But, before turning to the question of how to specify a pentagon-hexagon
edge, we should consider what there is to gain from such an accomplishment.

Recalling the geometric account of the dodecahedral and icosahedral maps of
degrees $11$ and $19$, the soccer ball's $32$ faces are tantalizing.  Can we
describe the behavior of $g$ in a geometric way, that is, as a ``wrap-twist" of
one face onto $31$ other faces?  Specifically, a pentagon or hexagon would map
to the complement of the antipodal polygon with vertices and edge-midpoints sent
to their antipodes while face-centers are fixed.  The degree of such a map would
be $31$ and antipodal vertices would form critical $2$-cycles at which the local
behavior is squaring---precisely properties that $g$ possesses.

Now, consider a pentagon realized by five soccer ball vertices about an
icosahedral vertex $C$---the pentagon's center.  Since the only symmetry in \I\
that moves one of the pentagonal vertices $X$ to an adjacent vertex $Y$ is a
$\frac{1}{5}$-turn about $C$, a curve between $X$ and $Y$ that does not contain
another pentagonal vertex cannot be setwise invariant under some element in \I.
Hence, there is no natural ``geometric'' edge between a pentagon and adjacent
hexagon.  We can, however, generate such an edge dynamically.

The idea is to locate a point $Z$ on the icosahedral mirror between $X$ and $Y$ that has period $2$ under $g$
and so, is fixed by $g^2$.  In Figure~\ref{fig:crit_pts}, we can take the central point with $5$-fold rotational symmetry as $0$ and the horizontal great circle as the real axis on which $Z$ resides.

\begin{fact}

$Z\approx .143827.$

\end{fact}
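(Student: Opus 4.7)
The plan is to identify $Z$ as a real $2$-periodic point of $g$ on the real axis $\RR$ lying between two adjacent pentagonal critical points $X$ and $Y$ of $g$, obtained by numerically solving a polynomial equation.

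First, because $\RR$ is an icosahedral mirror, complex conjugation belongs to $\alpha\,\I$; equivariance of $g$ under the extended group $\I\cup\alpha\,\I$ then gives $g(\overline{z})=\overline{g(z)}$, so $g$ preserves $\RR\cup\{\infty\}\subset\CP{1}$. Setting $y=1$ in the formula for $g$ from the preceding Fact realizes the restriction $g|_{\RR}(z)=P(z)/Q(z)$ as an explicit real rational map of degree $31$ with real coefficients.

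Second, I would form the $2$-periodicity equation $g^2(z)-z=0$ on $\RR$.  After clearing denominators this becomes a real polynomial of high degree (on the order of $31^2$) whose roots are precisely the points of period dividing $2$.  Dividing out the fixed-point factor coming from $g(z)-z=0$ leaves a polynomial $Q_2(z)$ whose real roots are the exact period-$2$ points on $\RR$.  Among these are the already-catalogued $2$-cycles --- the members of $\OO_1\cap\RR$ identified earlier in the section, plus the antipode-exchanged edge-midpoints from the $30$-point special orbit --- together with any additional real period-$2$ points, such as the sought $Z$.

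Third, numerical solution of $Q_2(z)=0$ in \emph{Mathematica}, followed by removal of the enumerated known roots and restriction to the interval between the two adjacent pentagonal vertices $X,Y\in\OO_1\cap\RR$ (read off from the real solutions to $M|_{z=\overline{z}}=0$ and visible in Figure~\ref{fig:crit_pts}), extracts the unique root $Z\approx 0.143827$, whose $2$-periodicity is then confirmed by direct evaluation of $g^2(Z)$.

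The main obstacle is the unwieldy degree of $Q_2$.  A useful shortcut, consistent with the geometric wrap-twist picture that the discussion is aiming at, is to look first for a real point exchanged with its antipode: testing the much smaller degree-$32$ equation
$$z\,P(z)+Q(z)=0,$$
which encodes $g(z)=-1/z$, produces the same value $Z\approx 0.143827$ with negligible effort, and incidentally verifies $g(Z)=\widetilde{Z}$, so $Z$ is automatically $2$-periodic under $g$.
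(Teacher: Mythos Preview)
Your approach is essentially what the paper does: this statement is one of the items the paper labels a ``Fact,'' meaning a \emph{Mathematica} computation with no argument given beyond the setup in the surrounding text.  The paper's setup is exactly yours---find a real $2$-periodic point of $g$ on the mirror $\RR$ between the adjacent pentagonal vertices $X$ and $Y$---and the numerical value is simply reported.

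One remark on your shortcut.  The equation $z\,P(z)+Q(z)=0$ encodes $g(Z)=\widetilde{Z}$, which is stronger than the bare period-$2$ condition the paper imposes.  The paper \emph{motivates} antipode exchange at edge-midpoints through the conjectural wrap--twist description, but does not establish it independently; so while the shortcut evidently produces the same number, it quietly assumes part of the geometric picture that the construction of $Z$ is meant to support.  Your first route---solving $g^2(z)=z$ on $\RR$, discarding fixed points and the already-catalogued $2$-cycles, and selecting the root in the interval $(X,Y)$---is the one that matches the paper's stated hypothesis without circularity.
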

Furthermore, $Z$ is a repelling fixed point under $g^2$.  Now, let $\mathcal{S}$
be an arbitrarily small line segment through $Z$ perpendicular to the real axis.
 Since $g$ is conformal at $Z$ and preserves the real axis, $g^2(\mathcal{S})$
contains $Z$ and is perpendicular to the real axis.

\begin{fact}

Upon iteration, the
trajectory $(g^{2k}(W))$ of a point $W$ on $S$ other than $Z$ tends to one of
the two nearest soccer ball vertices---which are fixed by $g^2$---depending upon the
sign of the imaginary part of $W$.

\end{fact}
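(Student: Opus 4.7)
The plan is to combine a Koenigs linearization at $Z$ with the mirror symmetry of $g$, and then invoke the global attracting structure of the critically finite map $g$ to identify the limit. First I would pin down the multiplier at $Z$. Because $g$ commutes with the antiholomorphic reflection $\sigma$ through \RR\ (an element of $\alpha\,\I$), the restriction $g|_{\RR}$ is a real-analytic self-map of $\RR\cong\RP{1}$, so $\lambda:=(g^2)'(Z)$ is real; since $Z$ is repelling, $|\lambda|>1$. Were $\lambda<-1$, the first iterate of $g^2$ would swap the two half-disks cut out by \RR\ near $Z$, contradicting the preservation of $\mathrm{sign}(\mathrm{Im}\,W)$ asserted in the statement; a direct Mathematica evaluation at $Z\approx 0.143827$ confirms $\lambda>1$.

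Next I would apply Koenigs' theorem at $Z$: there is a conformal germ $\psi$ on a neighborhood $U$ of $Z$ with $\psi(Z)=0$ and $\psi\circ g^2=\lambda\,\psi$. Normalizing $\psi$ so that $\psi(\RR\cap U)$ is real-valued, the uniqueness of Koenigs coordinates up to a scalar forces $\psi\circ\sigma=\overline{\psi}$, hence $\psi(\Ss\cap U)$ is a curve through $0$ tangent to the imaginary $w$-axis. With $\lambda>1$, the linear dynamics $w\mapsto\lambda w$ drives any $w\neq 0$ along a ray away from the origin while remaining strictly in its own upper or lower half-plane. Pulling back through $\psi^{-1}$ shows that for $W\in\Ss\setminus\{Z\}$ sufficiently close to $Z$ there is a first escape time $N$ at which $g^{2N}(W)\in\partial U$ lies on the same side of \RR\ as $W$.

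At this point global dynamics take over. Because $g$ is critically finite with the $60$ soccer ball vertices as superattracting fixed points of $g^2$, standard Fatou theory tells us that every point of the Fatou set has orbit tending to one of these $60$ attractors. It remains to show that the exit point $g^{2N}(W)$ lies in the immediate basin of the nearest vertex $V_+$ (when $\mathrm{Im}\,W>0$) or $V_-$. The \emph{Dynamics 2} rendering in Figure~\ref{fig:basins} already exhibits a half-disk on each side of \RR\ abutting $Z$ and contained in the immediate basin of the corresponding nearest vertex; once such a half-disk is certified---numerically, by iterating $g^2$ on a fine lattice and invoking continuity of attraction---every $W\in\Ss\setminus\{Z\}$ close enough to $Z$ enters it after finitely many iterations, and its trajectory then converges to $V_{\pm}$.

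The principal obstacle is this last step: the jump from local escape to the \emph{correct} global basin. The first two steps are formal consequences of reflective symmetry together with Koenigs' theorem, but verifying that an escaping trajectory lands in the immediate basin of the nearest vertex, rather than some other attractor in the same half-plane, requires global information about the Julia set of $g$ and the combinatorics of basin boundaries near $Z$. In keeping with the other Facts in the paper, this is most cleanly handled by direct numerical iteration, and the statement should be read as the conjunction of the analytic local picture above and a finite computational verification.
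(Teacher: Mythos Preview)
The paper offers no argument here: the statement is labeled a ``Fact,'' which the author explicitly reserves for assertions established by \emph{Mathematica} computation, so the paper's proof is purely numerical. Your proposal is genuinely different in that it supplies analytical scaffolding---Koenigs linearization at the repelling point $Z$ together with the reflection symmetry through \RR---which cleanly separates what is forced by local structure (a real multiplier, preservation of $\mathrm{sign}(\mathrm{Im}\,W)$ within the linearizing neighborhood once $\lambda>1$) from what truly requires global input (that the escape point lands in the immediate basin of the \emph{nearest} vertex rather than some other attractor on the same side of \RR). This decomposition is sound and more informative than the bare computational claim. Two small remarks: your first attempt to exclude $\lambda<-1$ by invoking the sign-preservation ``asserted in the statement'' is circular, though you immediately repair it by direct evaluation; and the claim implicitly requires $\mathcal{S}\setminus\{Z\}$ to lie in the Fatou set of $g$, which is again part of the numerical verification rather than a formal consequence of the local analysis.
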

However, the trajectory
$$\Sigma=\bigcup_{k=0}^\infty g^k(\mathcal{S})$$
of $\mathcal{S}$, which spans between $X$ and $Y$, is not the pentagon's edge.
After all, $\Sigma$ depends on the chosen length of $\mathcal{S}$.  A further
rationale stems from the fact that $\Sigma$ is an analytic curve. If $\Sigma$
were an edge of a pentagon, $g$ could be extended from the interior of a
pentagon by Schwarz reflection across $\Sigma$.  Evidently, the loss of analyticity occurs at $Z$.    The result would be a pentagon, rather than a hexagon, on the other side of $\Sigma$. So, the $g$-invariant pentagon-hexagon edge $\Pi$ appears as the limit of analytic curves as the length of $\mathcal{S}$ vanishes:
$$\Pi=\lim_{|\mathcal{S}|\to 0} {\bigcup_{k=0}^\infty g^k(\mathcal{S})}.$$
Empirical considerations indicate that $\Pi$ is very nearly the circular arc through $X$, $Y$, and $Z$.  Figure~\ref{fig:pent_edge} depicts on the sphere an approximation of $g$'s
Julia set along with the pentagon-hexagon edges.  The image also depicts hexagon-hexagon edges, arcs of great circles that are invariant under $g^2$ and whose midpoint is fixed and repelling.  Note that the  Julia set is the complement of $g$'s basins of attraction.
\begin{figure}[ht]
\resizebox{4in}{!}{\includegraphics[]{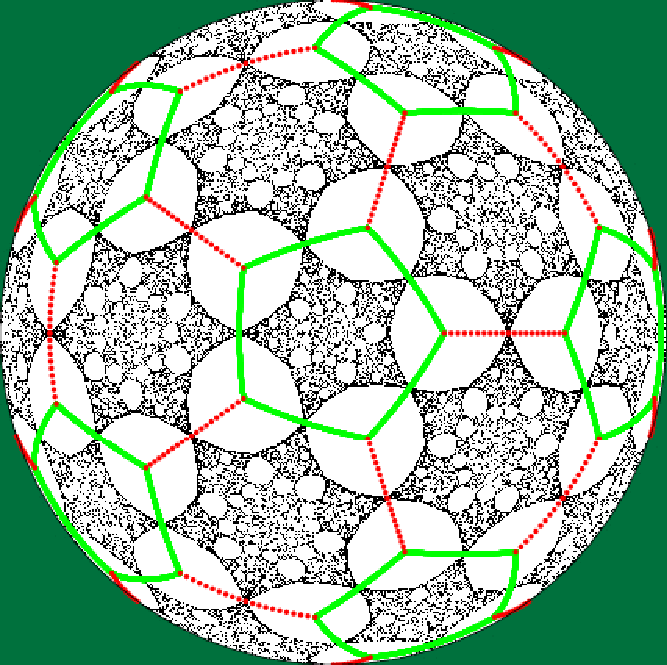}}
\caption{Soccer ball's edges on the Julia set of $g$ (\emph{Mathematica} image created by
Peter Doyle).}
\label{fig:pent_edge}
\end{figure}

For a final observation, consider the $3$-fold intersection of edges at a
vertex $v$---two pentagon-pentagon and one pentagon-hexagon.  Since the local
behavior of $g$ at $v$ is squaring, the three edges trisect a full turn.  Thus, our
dynamical soccer ball realizes \emph{equal polygonal angles} rather than equal
side-lengths and can be approximated by pentagons that are regular and hexagons
that are not.  In this light, we can see why there are only two special $31$-maps: there
are two truncations of the icosahedron that produce polyhedra with $32$ faces
and trisected angles at each of $60$ vertices.  Indeed, we might wonder about
higher-degree maps that are geometrically associated with ployhedra (chiral?)
that are icosahedrally derived. 
\section{Completely solving the quintic}

Determining all of the solutions to an \Alt{5}-symmetric quintic requires that
we  break the symmetry by distinguishing any one of the $60$ orderings of
the roots from the others.  In the setting realized by the icosahedron, \Alt{5}
symmetry is broken by randomly selecting a point $z_0\in \CP{1}$.  Such a point
almost always belongs to a $60$-point \I-orbit.   Since the basins of the superattracting pairs of antipodal vertices have full measure, the trajectory of $z_0$ under $g$ locates such a pair of vertices on the special soccer ball.  Selecting every other iterate, we arrive at a single vertex:
$$z_{\infty}=\lim_{k\to \infty}{g^{2\,k}(z_0)}.$$
(A similar discussion goes through in the case of $h$ and the special dual
soccer ball.)  Since $\I(z_{\infty})$  contains $60$ points, $g$'s dynamics
maintains the full \Alt{5} symmetry-breaking of the initial choice $z_0$.
Accordingly, one iterative run from $z_0$ to $z_{\infty}$ suffices for the
extraction of all five roots.

The algorithm that harnesses a symmetrical map to
a polynomial's solution is explicitly described in \cite{sextic}.  Here, I will
sketch how a complete solution issues from a single data point, namely, one of
the soccer ball's vertices.  The key step is to associate each quintic with
an icosahedral group and, thereby, with a soccer ball map.

\subsection{Quintic resolvent}

For purposes of exposition, we use inhomogeneous coordinates and refer to
projective objects rather than work in the more cumbersome, strictly correct lifted
setting.

As indicated earlier, the $20$-point \I-orbit $\OO_{20}$  of dodecahedral
vertices decomposes into five sets of four tetrahedral orbits
$$\{t_{a1},t_{a2},t_{a3},t_{a4}\}\qquad a=1,\dots,5$$
under respective tetrahedral subgroups \T{a}.  Let
$$T_{a}(z)=\prod_{k=1}^4(z-t_{ak})$$
so that the $T_a$ are degree-$4$ invariants under \T{a}.  Moreover, the $T_a$
undergo \Alt{5} permutation by the action of \I.

Now, form the quintic whose roots are the $T_a(z)$:
$$P_5(s)=\prod_{a=1}^5 \bigl(s-T_a(z)\bigr)=\sum_{k=0}^5 a_k(z)\,s^{5-k}.$$
Since the $T_a$ are permuted by \I, the coefficients $a_k(z)$ are \I-invariants
of degrees $4\,k$ for $k=0,\dots 5$.  The absence of \I-invariants in
degrees $4$, $8$, and $16$ forces
$$a_1=a_2=a_4=0$$
while degree considerations bear upon the remaining coefficients:
$$a_3=b\,F(z),\ a_5=c\,H(z),\ \text{where $a,b$ are constants}.$$
After a change of variable
$$s\to \frac{H^2}{F^3} s,$$
the resulting equation can be expressed as
$$P_{5_Z}(s)=s^5+b\,Z^2\,s^2+c\,Z^3=0$$
where
$$Z=\frac{F^5}{H^3}$$
is an ``icosahedral function'' whose values give \I-orbits.

\subsection{Root extraction}

As developed in \cite{sextic}, selecting a value for the parameter $Z$ amounts to choosing both a quintic $P_{5_Z}$ and an icosahedral action $\I_Z(w)$ where $u=B_{Z}(w)$ is a parametrized change of coordinates and $u$ is a reference coordinate that replaces $z$ as employed previously.  The group $\I_Z(w)$ gives rise to a function $J_{5_Z}$ that extracts a root $s_5$ of $P_{5_Z}$ given the asymptotic behavior of the $\I_Z$ equivariant $g_Z(w)$:
$$s_5=J_{5_Z}(w_\infty)\quad \text{where}\ w_{\infty}=\lim_{k\to \infty}{g_Z^k(w_0)}.$$
(We can take $w_{\infty}$ to be either point in the superattracting $2$-cycle.)

 As a vertex in the special soccer ball $\Ss_Z$ determined by $\I_Z$, $w_{\infty}$ (and it's $g_Z$-image) belongs to one of five ``tetrahedral icosahedra" $\tau_a,\ a=1,\dots,5$ into which $\Ss_Z$ decomposes. Each $\tau_a$ is a $12$-point \T{a} orbit.  Figure~\ref{fig:tet_icos} displays the vertices of these icosahedra each of which is referenced by a color.  (Of course, the configuration of the points in $\Ss_Z$ will have symmetries that depend on $Z$.)

\begin{figure}[ht]
\resizebox{4in}{!}{\includegraphics[]{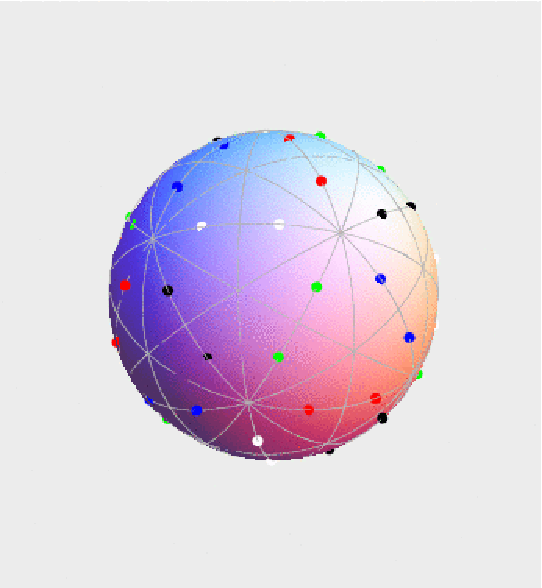}}
\caption{Five tetrahedral icosahedra in the soccer ball's vertices}
\label{fig:tet_icos}
\end{figure}

To obtain a second root of $P_{5_Z}$, we consider the \T{a}-symmetric quartic (for the relevant $a$)
$$P_{4_Z}=\frac{P_{5_Z}}{s-s_5}.$$
Now, in the \T{a} setting, build a root extractor $J_{4_Z}$ that relies on our having found a point in a $12$-point orbit and compute
$$s_4=J_{4_Z}(w_{\infty}).$$
As a practical matter, the specific $\tau_a$ to which $w_{\infty}$ belongs is
unknown.  However, we can determine the correct index by constructing
$J_{4_Z}$ for each $a$ and then evaluating $P_{4_Z}(J_{4_Z}(w_{\infty}))$.

The algorithm continues in a similar fashion by decomposing $\tau_a$ into four $3$-point \Z{3} orbits in order to obtain a solution $s_3=J_{3_Z}(w_{\infty})$ to the cubic
$$P_{3_Z}=\frac{P_{4_Z}}{s-s_4}.$$
At this stage, we have broken all of the $60$-fold symmetry and the roots of the final quadratic emerge.

In order for this cascade of solutions to flow, a single iteration
must produce a member of a $60$-point \I-orbit; any smaller orbit would not
provide a \Z{3} orbit with more than one element.  For instance, suppose the
dynamical output $w_\infty$ is a $20$-point and so, resides in one of the
five tetrahedral orbits of size $4$.  After extracting one of the quintic's
roots, solving the quartic factor that remains requires that we determine to
which of the four $1$-point \Z{3} orbits $w_\infty$ belongs.  But, having done
so, the \Z{3} orbit---being a single point---cannot distinguish the roots of the
cubic factor that divides the quartic.

\section{Acknowledgment}

Thanks go to Peter Doyle for a query and discussion that inspired this
work.

\end{document}